\def\input@path{{macros/}}
\newtheorem{problem}{Problem}
\newcommand{\ds}{\,\mathrm{d}s}
\newcommand{\dxi}{\,\mathrm{d}\xi}
\newcommand{\dz}{\,\mathrm{d}z}
\newcommand{\dtau}{\,\mathrm{d}\tau}
\newcommand{\Cf}{\mathcal{C}}
\newcommand{\Sf}{\mathcal{S}}
\newcommand{\reffun}[1]{\texttt{\ref{#1}}}
\newcommand{\DEF}{\mathrel{\mathop:}=}
\newcommand{\assign}{\leftarrow}
\newcommand{\ASSIGNl}[1]{\rlap{$#1$}\qquad\assign\,}
\newcommand{\ASSIGNm}[1]{\rlap{$#1$}\quad\;\assign\,}
\newcommand{\ASSIGNs}[1]{\rlap{$#1$}\quad\assign\,}
\begin{document}

\markboth{E.Bertolazzi and M.Frego}{Fast and accurate clothoid fitting}

\title{Fast and accurate clothoid fitting}

\author{
\uppercase{Enrico Bertolazzi} and \uppercase{Marco Frego}\affil{University of Trento, Italy}}

\begin{abstract}
An effective solution to the problem of Hermite $G^1$ interpolation 
with a clothoid curve is provided.
At the beginning the problem is naturally formulated as a system of nonlinear equations
with multiple solutions that is generally difficult to solve numerically.
All the solutions of this nonlinear system are reduced to the computation
of the zeros of a single nonlinear equation.
A simple strategy, together with the use of a good and simple guess function, 
permits to solve the single nonlinear equation with a few iterations of the
Newton--Raphson method.

The computation of the clothoid curve requires the computation of Fresnel 
and Fresnel related integrals.
Such integrals need asymptotic expansions 
near critical values to avoid loss of precision.
This is necessary when, for example, the solution of interpolation problem 
is close to a straight line or an arc of circle.
Moreover, some special recurrences are deduced for the efficient computation
of asymptotic expansion.

The reduction of the problem to a single nonlinear function in one variable
and the use of asymptotic expansions make the solution algorithm fast and robust.

\end{abstract}

\category{G.1.2}{Numerical Analysis}{Approximation}

\terms{Algorithms, Performance}

\keywords{%
  Clothoid fitting, Fresnel integrals, Hermite $G^1$ interpolation, Newton--Raphson 
}

\acmformat{Enrico Bertolazzi, Marco Frego. Fast and accurate clothoid fitting}

\begin{bottomstuff}
Author's addresses: Department of Mechanical and Structural Engineering
\end{bottomstuff}

\maketitle

\section{Introduction}

There are several curves proposed
for Computer Aided Design~\cite{Farin:2001,Baran:2010,deBoor:1978},
for planning trajectories of robots and vehicles 
or the geometric design of roads \cite{dececco:2007,Scheuer:1997}.
The most important among these are clothoids also known as Euler's or Cornu's spirals,
clothoids splines, i.e. a planar curve consisting in clothoid segments, circles and straight lines,
\cite{Davis:1999,Meek:1992,Meek:2004,Meek:2009,Walton:2009},
generalized clothoids or Bezier spirals \cite{Walton:1996}.
Pythagorean Hodograph \cite{Walton:2007,Farouki:1995}, bi-arcs and conic curves are also widely 
used~\cite{Pavlidis:1983}.
It is well known that the best or most pleasing curve is the clothoid, despite its transcendental form.

The procedure that allows a plane curve to interpolate two given points 
with assigned (unit) tangent vectors is called $G^1$ \emph{Hermite interpolation},
if given curvatures at the two points is also satisfied, 
then this is called $G^2$ Hermite interpolation \cite{McCrae:2008}.
A single clothoid segment is not enough to ensure $G^2$ Hermite interpolation,
because of the insufficient degrees of freedom.
Many authors have provided algorithms that use a \emph{pair} of clothoids 
segments to reach the $G^2$ Hermite interpolation requests.
However, often it is considered enough the cost-effectiveness of a $G^1$
Hermite interpolation, because it has been seen numerically that 
the discontinuity of the curvature is negligible.\\
The purpose of this paper is to describe a new method for
$G^1$ Hermite interpolation with a single clothoid segment,
which does not need to split the problem in mutually exclusive 
cases as in \citeNP{Meek:2009}, and does not suffer in case of 
degenerate Hermite data like straight lines or circles (see Figure~\ref{fig:1and2} on the right). 
These are of course limiting cases but are treated naturally in our approach. 
Because of numerical stability in the computation,
we preferred to introduce an appropriate threshold to avoid 
such degenerate situation in practice.
A precise estimate of such switching points is discussed.
Finally, the problem of the $G^1$ Hermite interpolation 
is reduced to a solution of a single nonlinear equation,
for example by the Newton--Raphson method. 
In order to provide a fast and accurate algorithm, we give a good initial guess for the 
Newton--Raphson method solver so that a few iterations suffice. \\ 
The remainder of the article is structured as follows. 
In the next section we define the interpolation problem, in the third we describe the passages to reformulate it such that from three equations in three unknowns  it reduces to one nonlinear equation in one unknown. This is enough from the theoretical point of view. The fourth section is devoted to the discussion of a good starting point for the Newton--Raphson method, so that using that guess we achieve a quick convergence, and how to select a correct solution of the nonlinear equation. 
Section \ref{sec:5} introduces the Fresnel integrals in their standard form and shows how the $G^1$ interpolation problem (i.e. the argument of the trigonometric functions involved is not purely a second order monomial, but a complete second order polynomial) can be conducted to that form.  Section \ref{sec:6} analizes what happens when the parameters of computation give numerical instabilities in the formulas, and \textit{ad hoc} expressions for such cases are provided. In the appendix there are the algorithms written in pseudo-code and a summary of the algorithm for the accurate computation of the clothoid spline.

\section{The fitting problem}
The construction of highway and railway routes and the 
trajectories of mobile robots can be split in the in construction 
of a piecewise clothoid curve which definition is given next.
\begin{definition}[Clothoid curve]
The general parametric form of a clothoid spiral curve is the following
\begin{EQ}[rcl]\label{clot}
  x(s) &=& x_0 + \int_0^s \cos \left(\frac{1}{2}\kappa'\tau^2+\kappa\tau+\vartheta_0\right) \dtau, \\
  y(s) &=& y_0 + \int_0^s \sin \left(\frac{1}{2}\kappa'\tau^2+\kappa\tau+\vartheta_0\right) \dtau,
\end{EQ}
where $s$ is the arc length, 
$\kappa't+\kappa$ is the
linearly varying curvature, $(x_0,y_0)$ is the starting point and $\vartheta_0$
initial angle.
Notice that $\frac{1}{2}\kappa's^2+\kappa s+\vartheta_0$
is the angle of the curve at arc length $s$.
\end{definition}
Clothoids curves are computed via the Fresnel sine and cosine integrals
(see \citeNP{abramowitz:1964} for a definition)
which is discussed forward in Section \ref{sec:5}.

The determination of the parameters $\vartheta_0$, $\kappa$ and $\kappa'$
are determined by points and angles at the extrema of the curve.
Thus, the problem considered in this paper is stated next.
\begin{problem}\label{prob:1}
  Given two points $(x_0,y_0)$ and $(x_1,y_1)$
  and two angles $\vartheta_0$ and $\vartheta_1$, 
  find a clothoid segment of the form~\eqref{clot} which
  satisfies:
  \begin{EQ}\label{eq:prob:1}
    x(0) = x_0, \qquad
    y(0) = y_0, \qquad
    \arctan\left(\dfrac{y'(0)}{x'(0)}\right) = \vartheta_0,\qquad \\
    x(L) = x_1, \qquad
    y(L) = y_1, \qquad
    \arctan\left(\dfrac{y'(L)}{x'(L)}\right) = \vartheta_1,\qquad \\
  \end{EQ}
  with minimal positive $L$ (the length of the curve). 
  The general scheme is showed in Figure~\ref{fig:1and2} on the left.
\end{problem}
\begin{figure}[!bt]
  \begin{center}
    \includegraphics[scale=0.7]{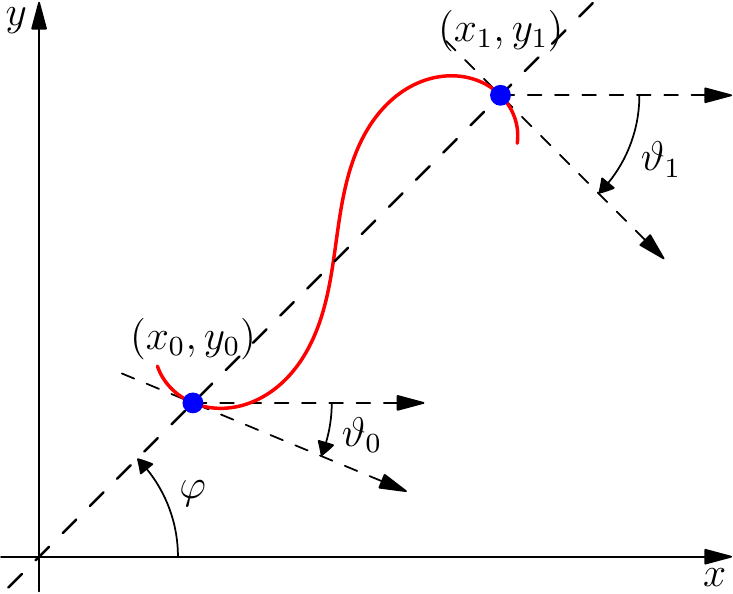}
    \qquad
    \includegraphics[scale=0.7]{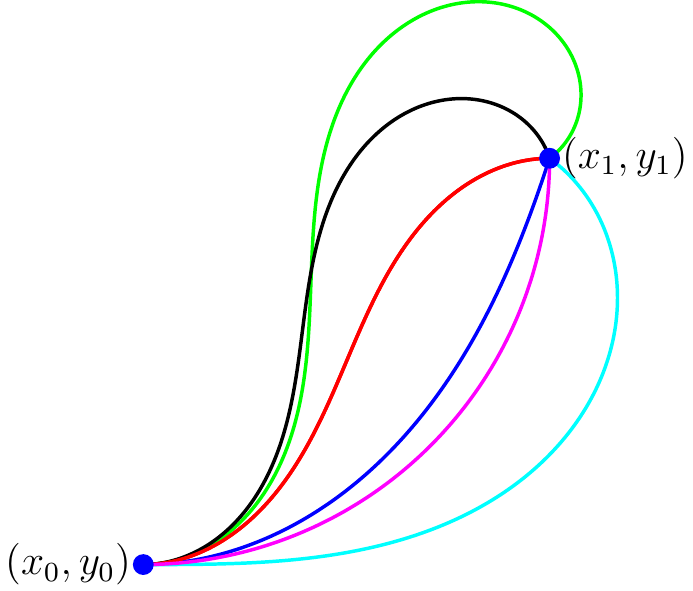}
  \end{center}
  \caption{On the left $G^1$ Hermite interpolation schema. On the right some cases
           representing different final angles.
           The magenta arc is the limiting case of a circle.
           The other colored arcs represent intermediate cases.}
  \label{fig:1and2}
\end{figure}
Solution of Problem~\ref{prob:1} is a zero of the following
nonlinear system involving the unknowns $L$, $\kappa$, $\kappa'$:
\begin{EQ}\label{eq:F:nonlin}
   \bm{F}(L,\kappa, \kappa')
   =
   \pmatrix{
     x_1-x_0 -\int_0^L \cos \left(\frac{1}{2}\kappa's^2+\kappa s+\vartheta_0\right) \ds\\
     y_1-y_0 -\int_0^L \sin \left(\frac{1}{2}\kappa's^2+\kappa s+\vartheta_0\right) \ds\\
     \vartheta_1-\left(\frac{1}{2}\kappa'L^2+\kappa L+\vartheta_0\right)
   }.\qquad
\end{EQ}
Find the points such that $\bm{F}(L,\kappa, \kappa')=\zero$
is a difficult problem to solve in this form.
In the next section a reformulation of the problem in a simpler
form permits to solve it easily. 

\section{Reformulation of the problem}
\label{sec:3}
Nonlinear system~\eqref{eq:F:nonlin} is put in an equivalent 
form by introducing the parametrization $s=\tau L$ so that
the integrals involved in computation have fixed extrema independent of $L$:
\begin{EQ}[rcl]\label{eq:F:nonlin:2}
   \bm{F}\left(L,\dfrac{B}{L},\dfrac{2\,A}{L^2}\right)
   &=&
   \pmatrix{
     \Delta x -L\int_0^1 \cos \left(A\tau^2+B\tau+\vartheta_0\right) \dtau \\
     \Delta y -L\int_0^1 \sin \left(A\tau^2+B\tau+\vartheta_0\right) \dtau \\
     \vartheta_1-\left(A+B+\vartheta_0\right)
   },\qquad
\end{EQ}
where $A=\frac{1}{2}\kappa' L^2$, $B=L\,\kappa$, $\Delta x = x_1-x_0$, $\Delta y = y_1-y_0$.\\
The third equation in \eqref{eq:F:nonlin:2} is linear so that we can 
solve it with respect to $B$,
\begin{EQ}\label{eq:F:nonlin:3:pre}
   B = \Delta\vartheta-A, \qquad \Delta\vartheta=\vartheta_1-\vartheta_0,
\end{EQ}
and the solution of nonlinear system \eqref{eq:F:nonlin:2} is reduced to the
solution of the nonlinear system of two equations in two unknown, namely $L$ and $A$:
\begin{EQ}\label{eq:F:nonlin:6}
   \bm{G}(L,A)
   =
   \pmatrix{
     \Delta x -L\int_0^1 \cos \left(A\tau^2+(\Delta\vartheta-A)\tau+\vartheta_0\right) \dtau \\
     \Delta y -L\int_0^1 \sin \left(A\tau^2+(\Delta\vartheta-A)\tau+\vartheta_0\right) \dtau \\
   },\qquad
\end{EQ}
followed by the computation of $B$ using \eqref{eq:F:nonlin:3:pre}.
Nonlinear system \eqref{eq:F:nonlin:6} is easier to solve than~\eqref{eq:F:nonlin:2}.
We can perform further simplification using polar coordinates
to represent $(\Delta x,\Delta y)$, namely
\begin{EQ}\label{eq:xhhyhh:polar}
   \Delta x = r\cos\varphi,\qquad
   \Delta y = r\sin\varphi,
\end{EQ}
and from~\eqref{eq:xhhyhh:polar} and $L>0$ we can define two new nonlinear
functions $f(L,A)$ and $g(A)$, where $g(A)$ is independent of $L$, 
as follows:
\begin{EQ}
   f(L,A)=\bm{G}(L,A)\cdot\pmatrix{\cos\varphi\\\sin\varphi},\qquad
   g(A)=\dfrac{1}{L}\bm{G}(L,A)\cdot\pmatrix{\sin\varphi\\-\cos\varphi}.
\end{EQ}
Using the identity $\sin(\alpha-\beta)=\sin\alpha\cos\beta-\cos\alpha\sin\beta$,
function $g(A)$ simplifies in:
\begin{EQ}\label{eq:g}
   g(A) = \Theta(A;\Delta\vartheta,\Delta\varphi),
\end{EQ}
where $\Delta\varphi=\vartheta_0-\varphi$ and 
\begin{EQ}\label{eq:def:Theta}
  \Theta(A;\Delta\vartheta,\Delta\varphi)
  = \int_0^1 
  \sin\big(A\tau^2+(\Delta\vartheta-A)\tau+\Delta\varphi\big)
  \dtau,
\end{EQ}
while using the identity $\cos(\alpha-\beta)=\cos\alpha\cos\beta+\sin\alpha\sin\beta$,
function $f(L,A)$ reduces to
\begin{EQ}[rcl]\label{eq:h}
  f(L,A)
  &=& 
  \sqrt{\Delta x^2+\Delta y^2}-L\,h(A), \\
  h(A) &=&\int_0^1 
  \cos\big(A\tau^2+(\Delta\vartheta-A)\tau+\Delta\varphi\big)
  \dtau
  = \Theta\left(A;\Delta\vartheta,\Delta\varphi+\frac{\pi}{2}\right).
\end{EQ}
\begin{remark}
  Defining $\phi_0=\vartheta_0-\varphi$ and $\phi_1=\vartheta_1-\varphi$
  we have $\Delta\vartheta = \phi_1-\phi_0$ and $\Delta\varphi= \phi_0$,
  moreover, the angles $\phi_0$ and $\phi_1$ are the same used in~\citeNP{Walton:2009}
  to derive $G^1$ interpolant.
\end{remark}

\begin{lemma}\label{lem:Theta:0}
  The function $\Theta$ defined in~\eqref{eq:def:Theta} satisfy
  \begin{EQ}\label{eq:thetanz}
    \Theta\left(A;\Delta\vartheta,z\right)=0,
    \quad\Rightarrow\quad
    \Theta\left(A;\Delta\vartheta,z+\frac{\pi}{2}\right)\neq 0.
  \end{EQ}
\end{lemma}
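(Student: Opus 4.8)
The plan is to recast \eqref{eq:thetanz} as the non‑vanishing of a single complex integral and then to exploit the monotonicity of the (affine) derivative of the quadratic phase together with the positivity of the Fresnel cosine integral. Since $\sin(\,\cdot\,+\pi/2)=\cos(\,\cdot\,)$, writing $\psi(\tau)\DEF A\tau^2+(\Delta\vartheta-A)\tau+z$ the two quantities in \eqref{eq:thetanz} are the imaginary and real parts of $I(A)\DEF\int_0^1 e^{\,\ii\,\psi(\tau)}\dtau$, so the lemma is equivalent to $I(A)\neq0$. Because $z$ enters $\psi$ only additively, $|I(A)|$ does not depend on $z$; and the substitution $\tau\mapsto1-\tau$ together with complex conjugation show that $|I(A)|$ is invariant under $\Delta\vartheta\mapsto-\Delta\vartheta$ and under $A\mapsto-A$. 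Hence I may assume $A\ge0$ and $0\le\Delta\vartheta<2\pi$ (the Hermite angles, hence $\Delta\vartheta$, being normalised modulo $2\pi$; note that $\Delta\vartheta$ equal to a nonzero multiple of $2\pi$ is a genuine exception, since then $I(0)=0$). The case $A=0$ is immediate, as $|I(0)|$ equals $2|\sin(\Delta\vartheta/2)|/|\Delta\vartheta|$ (or $1$ if $\Delta\vartheta=0$), which is positive for $0\le\Delta\vartheta<2\pi$.

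For $A>0$ the derivative $\psi'(\tau)=2A\tau+\Delta\vartheta-A$ is increasing, so $\psi$ is either strictly monotone on $[0,1]$ (exactly when $\Delta\vartheta\ge A$) or has a single interior minimum $\tau^\ast=(A-\Delta\vartheta)/(2A)\in(0,1)$ (when $\Delta\vartheta<A$), and I would treat the two situations separately. In the \emph{unimodal} case I plan to write $\psi(\tau)=\psi(\tau^\ast)+A(\tau-\tau^\ast)^2$ and substitute $p=\sqrt A\,|\tau-\tau^\ast|$ on $[0,\tau^\ast]$ and on $[\tau^\ast,1]$; this shows that $I(A)$ equals a positive multiple of $e^{\,\ii\,\psi(\tau^\ast)}\bigl(G(\sqrt A\,\tau^\ast)+G(\sqrt A\,(1-\tau^\ast))\bigr)$, where $G(x)\DEF\int_0^x e^{\,\ii\,p^2}\,\mathrm{d}p$. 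Both arguments of $G$ are strictly positive, and $\mathrm{Re}\,G(x)=\int_0^x\cos p^2\,\mathrm{d}p$ is a positive multiple of the Fresnel cosine integral, hence $>0$ for every $x>0$; therefore $\mathrm{Re}\bigl(e^{-\ii\,\psi(\tau^\ast)}I(A)\bigr)>0$, so $I(A)\neq0$.

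In the \emph{monotone} case ($0<A\le\Delta\vartheta$) the change of variable $v=\psi(\tau)-z\in[0,\Delta\vartheta]$, for which $\mathrm{d}v=\sqrt{(\Delta\vartheta-A)^2+4Av}\,\dtau$, gives $e^{-\ii z}I(A)=\int_0^{\Delta\vartheta}e^{\,\ii\,v}\rho(v)\,\mathrm{d}v$ with $\rho(v)=\bigl((\Delta\vartheta-A)^2+4Av\bigr)^{-1/2}$ positive and strictly decreasing. If $\Delta\vartheta\le\pi$ the imaginary part is clearly positive; if $\pi<\Delta\vartheta<2\pi$, splitting at $v=\pi$ and using that $\rho\ge\rho(\pi)$ where $\sin v\ge0$ and $\rho\le\rho(\pi)$ where $\sin v\le0$ gives $\int_0^{\Delta\vartheta}\sin v\,\rho(v)\,\mathrm{d}v\ge\rho(\pi)\int_0^{\Delta\vartheta}\sin v\,\mathrm{d}v=\rho(\pi)(1-\cos\Delta\vartheta)>0$, so $I(A)\neq0$ again. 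The main obstacle is exactly this last sub‑case, $\Delta\vartheta$ close to $2\pi$ (an ``almost closed'' clothoid): there the naive bound $\cos(\psi-c)\ge0$ on $[0,1]$ is unavailable, and one genuinely needs the monotonicity of $\rho$ — equivalently, of the curvature — to beat the cancellation in the oscillatory integral; the fact that $\Delta\vartheta$ a nonzero multiple of $2\pi$ is a true exception (then $I\equiv0$ at $A=0$) confirms that the angle normalisation cannot be dispensed with. Alternatively, the monotone case can be closed by noting that the same substitution makes $I(A)$ proportional to $G(p_1)-G(p_0)$ with $0\le p_0<p_1$ and invoking the classical fact that the Euler spiral $x\mapsto G(x)$ is non‑self‑intersecting.
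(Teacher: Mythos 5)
Your proof is correct and takes a genuinely different route from the paper's. The paper completes the square once for general $A>0$, writes $\sqrt{2A}/\sqrt{\pi}\;\Theta(A;\Delta\vartheta,z)$ as $\big(\Sf(a)+\Sf(b)\big)\cos\eta-\big(\Cf(a)+\Cf(b)\big)\sin\eta$, notes that simultaneous vanishing at $z$ and $z+\pi/2$ forces $\Cf(a)+\Cf(b)=\Sf(a)+\Sf(b)=0$, and concludes $a=-b$ (hence $A=0$) from the non-self-intersection of the Cornu spiral. You instead package the two quantities as $\mathrm{Im}$ and $\mathrm{Re}$ of $I=\int_0^1 e^{\ii\psi(\tau)}\dtau$ and show $I\neq0$ by splitting on whether the phase has an interior critical point: there you use positivity of $\int_0^x\cos(p^2)\,\mathrm{d}p$ for $x>0$, while in the monotone case your decreasing-amplitude argument needs no Fresnel facts at all. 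Both proofs ultimately lean on a classical property of the Euler spiral (injectivity for the paper, positivity of its cosine component for you), so neither is more self-contained, but your monotone case is more elementary and the complexification makes the role of the $\pi/2$ shift transparent. Two remarks. First, your reduction to $0\le\Delta\vartheta<2\pi$ is not literally a normalization: $I$ depends on the actual value of $\Delta\vartheta$, not on its class modulo $2\pi$, and your explicit bound $\rho(\pi)(1-\cos\Delta\vartheta)$ degenerates at $\Delta\vartheta=2\pi$; however, your closing alternative ($I$ proportional to $G(p_1)-G(p_0)$ with $p_0<p_1$, plus non-self-intersection) removes the restriction entirely --- and is in fact the paper's own argument in disguise --- so the gap is repaired within your proposal. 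Second, you correctly flag that the implication genuinely fails at $A=0$ when $\Delta\vartheta$ is a nonzero integer multiple of $2\pi$; the paper's ``it is immediate to verify'' step overlooks this (and its formula \eqref{eq:th:1} contains a typo: the numerator should read $\cos z-\cos(\Delta\vartheta+z)$), though the exception is harmless in context because the algorithm normalizes $\Delta\vartheta$ to $(-\pi,\pi]$.
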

\begin{proof}
  If $A=0$ by a simple computation we obtain
  \begin{EQ}\label{eq:th:1}
    \Theta\left(A;\Delta\vartheta,z\right)
    =
    \cases{ \dfrac{\cos\Delta\vartheta-\cos(\Delta\vartheta+z)}{\Delta\vartheta}
    & if $\Delta\vartheta\neq 0$, \\
    \dfrac{1-\cos z}{z} & if $\Delta\vartheta =0$,
  }
  \end{EQ}
  and it is immediate to verify~\eqref{eq:thetanz}. 
  Let $A>0$, after some manipulation we have
  \begin{EQ}
    \dfrac{\sqrt{2A}}{\sqrt{\pi}}
    \Theta\left(A;\Delta\vartheta,z\right)
    = \big(\Sf(a)+\Sf(b)\big)\cos\eta
    - \big(\Cf(a)+\Cf(b)\big)\sin\eta 
  \end{EQ}
  where
  \begin{EQ}[c]\label{eq:th:2}
    a = \dfrac{A-\Delta\vartheta}{\sqrt{2\pi A}}, \qquad
    b = \dfrac{A+\Delta\vartheta}{\sqrt{2\pi A}}, \qquad
    \eta = \dfrac{\pi}{2}a^2-z.
  \end{EQ}
  If implication \eqref{eq:thetanz} is false, i.e. both $ \Theta\left(A;\Delta\vartheta,z\right)=0$ and 
  $\Theta\left(A;\Delta\vartheta,z+\pi/2\right)=0$
  then from previous equations it follows
  \begin{EQ}
    \Sf(a)+\Sf(b)=0, \qquad
    \Cf(a)+\Cf(b)=0,
  \end{EQ}
  and the symmetry $\Cf(-z)=-\Cf(z)$ with $\Sf(-z)=-\Sf(z)$ 
  implies that points $(\Cf(a),\Sf(a))$ and $(\Cf(-b),\Sf(-b))$ are coincident 
  points on the Cornu spiral 
  in parametric
  form $(\Cf(t),\Sf(t))$ and thus $a=-b$. Hence,
  \begin{EQ}
     0  = a+b = 
     \dfrac{A-\Delta\vartheta}{\sqrt{2\pi A}} + 
     \dfrac{A+\Delta\vartheta}{\sqrt{2\pi A}}
     = \dfrac{\sqrt{2\,A}}{\sqrt{\pi}},
  \end{EQ}
  and thus $A=0$ which contradict the assumption that $A>0$.
  The case with $A<0$ is similar.
  \qed
\end{proof}
Actually, the situation can be understood graphically looking at the plots of $g(A)$ and $h(A)$,
as in figure ~\ref{fig:3andCompare} on the left.
It is possible to reverse the implication by the following lemma.
\begin{lemma}\label{lem:LAB}
  The solutions of the nonlinear system \eqref{eq:F:nonlin:6}
  are given by
  \begin{EQ}\label{eq:LAB}
    L = \dfrac{\sqrt{\Delta x^2+\Delta y^2}}{h(A)},\qquad
    \kappa = \dfrac{\Delta\vartheta-A}{L}, \qquad
    \kappa' = \dfrac{2\,A}{L^2},\qquad
  \end{EQ}  
  where $A$ is a root of $g(A)$ defined in equation~\eqref{eq:g}
  and $h(A)$ is defined in~\eqref{eq:h}.
\end{lemma}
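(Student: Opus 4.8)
The plan is to use the elementary fact that the two directions appearing in the definitions of $f$ and $g$, namely $(\cos\varphi,\sin\varphi)$ and $(\sin\varphi,-\cos\varphi)$, form an orthonormal basis of the plane. Expressed in this basis, the first component of $\bm G(L,A)$ is $f(L,A)$ and the second component equals $L\,g(A)$ by the definition of $g$, where $g$ does not depend on $L$. Hence, for $L>0$, the vector equation $\bm G(L,A)=\bm 0$ of \eqref{eq:F:nonlin:6} is equivalent to the pair of scalar equations $f(L,A)=0$ and $g(A)=0$. In particular, the $A$-coordinate of any solution must be a root of $g$, i.e. it satisfies $\Theta(A;\Delta\vartheta,\Delta\varphi)=0$ by \eqref{eq:g} and \eqref{eq:def:Theta}.

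It then remains to read off $L$ from the equation $f(L,A)=0$. By \eqref{eq:h} one has $f(L,A)=\sqrt{\Delta x^2+\Delta y^2}-L\,h(A)$ with $h(A)=\Theta(A;\Delta\vartheta,\Delta\varphi+\pi/2)$, so this equation determines $L$ uniquely as $L=\sqrt{\Delta x^2+\Delta y^2}/h(A)$, provided $h(A)\neq 0$. This is exactly where Lemma~\ref{lem:Theta:0} is used: since the chosen $A$ is a root of $g$, i.e. $\Theta(A;\Delta\vartheta,\Delta\varphi)=0$, the implication \eqref{eq:thetanz} with $z=\Delta\varphi$ forces $h(A)=\Theta(A;\Delta\vartheta,\Delta\varphi+\pi/2)\neq 0$, so the formula for $L$ is always meaningful. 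Once $L$ and $A$ are known, $\kappa$ and $\kappa'$ are recovered by inverting the change of variables $A=\frac{1}{2}\kappa'L^2$, $B=L\kappa$ of \eqref{eq:F:nonlin:2} together with $B=\Delta\vartheta-A$ from \eqref{eq:F:nonlin:3:pre}, which yields $\kappa=(\Delta\vartheta-A)/L$ and $\kappa'=2A/L^2$. For the converse, substituting a triple built from these formulas (with $A$ any root of $g$) back into \eqref{eq:F:nonlin:6}, equivalently into $f=0$ and $g=0$, shows that it vanishes, so the formulas describe exactly the solution set.

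I do not expect a genuine obstacle here: the only delicate point is the well-definedness of $L$, that is, ruling out $h(A)=0$ at a root $A$ of $g$, and this is precisely the content of Lemma~\ref{lem:Theta:0}. One caveat worth stating explicitly is that $L=\sqrt{\Delta x^2+\Delta y^2}/h(A)$ is positive only when $h(A)>0$, so among the roots of $g$ only those with $h(A)>0$ correspond to admissible, positive-length clothoid segments; choosing the right such root is a separate issue, dealt with in the next section, and does not affect the algebraic characterization proved here.
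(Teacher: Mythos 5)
Your proof is correct and follows essentially the same route as the paper: decompose $\bm{G}$ along the orthonormal directions $(\cos\varphi,\sin\varphi)$ and $(\sin\varphi,-\cos\varphi)$ to reduce $\bm{G}(L,A)=\zero$ to $f(L,A)=0$ and $g(A)=0$, then invoke Lemma~\ref{lem:Theta:0} to guarantee $h(A)\neq 0$ so that $L$ is well defined. You are somewhat more explicit than the paper's two-line argument --- in particular you spell out both directions of the equivalence and flag that positivity of $L$ requires $h(A)>0$ --- but no new idea is involved.
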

\begin{proof}
  Let $L$, $A$ satisfy~\eqref{eq:g} and \eqref{eq:LAB}.
  Then $f(L,A)=0$ and thus $\bm{G}(L,A)=\zero$.
  From Lemma~\ref{lem:Theta:0} when $g(A)=0$ then $h(A)\neq 0$ and
  thus $L$ is well defined.\qed
\end{proof}
\begin{remark}
  The solution of problem~\ref{prob:1} is build
  using Lemma~\ref{lem:LAB} selecting solution of minimal length.
  Notice that the parameter $A$ corresponding to minimal (positive) $L$
  depend only on the \emph{relative} angles and it is independent 
  on scaling factor.
  So that we can compute $A$ using only $\Delta\varphi$ and $\Delta\vartheta$
  assuming $r=\sqrt{\Delta x^2+\Delta y^2}=1$.
\end{remark}
Hence the interpolation problem is reduced 
to a single (nonlinear) equation that can be solved 
numerically with Newton--Raphson Method.
By a graphical inspection we notice that the roots
of $g(A)$ are simple, so that Newton--Raphson
converges quadratically.
Function \reffun{alg:findA} with \reffun{alg:buildGrid}
implements a simple strategy for the computation of minimum length 
clothoid on a grid of points and angles.
This algorithm is used to compute surface presented in 
Figure~\ref{fig:AL} on the top.

\begin{function}[H]
  \caption{findA($A_{\mathrm{guess}}$, $\Delta\vartheta$, $\Delta\varphi$,  $\mathrm{tol}$)}
  \label{alg:findA}
  \def\assign{\leftarrow}
  \SetAlgoLined 
  \small
  Define $g$ as $g(A):=\Theta(A,\Delta\vartheta,\Delta\varphi)$; Set $A\assign A_{\mathrm{guess}}$\;
  \lWhile{$\abs{g(A)}>\mathrm{tol}$}{
    $A\assign A - g(A)/g'(A)$
  }\;
  \Return{$A$}\;
\end{function}

\begin{function}[H]
  \caption{buildGrid($N$, $M$)}
  \small
  \label{alg:buildGrid}
  \SetKwFunction{findParams}{findParams}
  \def\assign{\leftarrow}
  \SetAlgoLined
  \lFor{$j=0,1,\ldots,M$}{$A^{\mathrm{guess}}_j\assign (j/M)40-20$}\;
  \For{$i,j=0,\ldots,N$}{
    $\Delta\varphi\assign 2\pi i/N-\pi$;\quad
    $\Delta\vartheta\assign 2\pi j/N-\pi$;\quad
    $L_{ij}\assign \infty$\;
    \For{$k=1,\ldots,M$}{
      \If{$\Theta(A^{\mathrm{guess}}_k,\Delta\vartheta,\Delta\varphi)\,
           \Theta(A^{\mathrm{guess}}_{k-1},\Delta\vartheta,\Delta\varphi)\leq 0$}{
        $\widetilde{A}\assign\,$\findA($(A^{\mathrm{guess}}_k+A^{\mathrm{guess}}_{k-1})/2$,$\Delta\vartheta$,$\Delta\varphi$)\;
        $\widetilde{L}\assign 1/\Theta(\widetilde{A},\Delta\vartheta,\Delta\varphi+\pi/2)$\;
        \lIf{$\widetilde{L}>0$ and $\widetilde{L}<L_{ij}$}{
          $L_{ij}\assign\widetilde{L}$;~ $A_{ij}\assign\widetilde{A}$\;
        }
      }
    }
  }
  \Return{$\bm{A}$, $\bm{L}$}\;
\end{function}
In the next section a recipe to select a good initial
point is described.
\begin{figure}
  \begin{center}
    \includegraphics[scale=0.65]{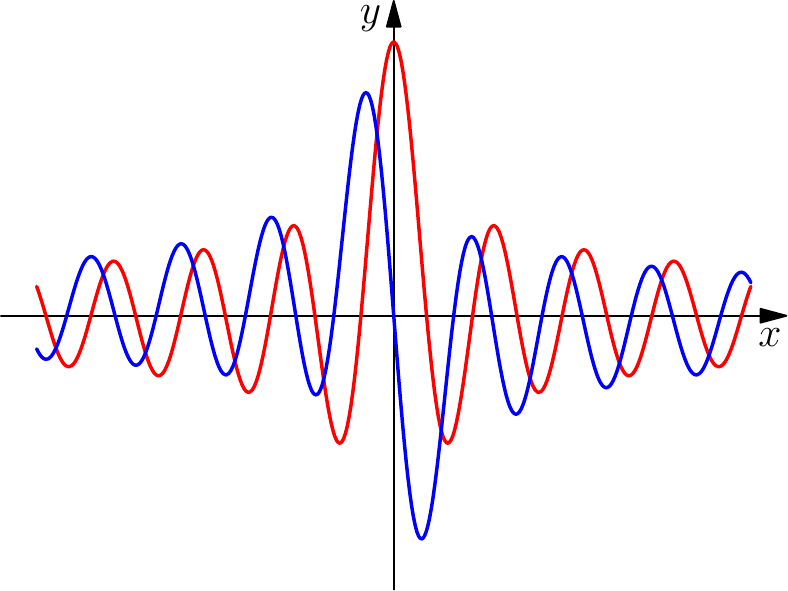}\quad
    \includegraphics[scale=0.95]{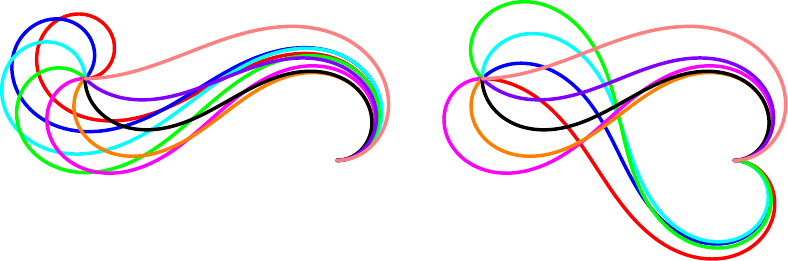}
  \end{center}
  \caption{On the left: plot of $g$ (red line) and $h$ (blue line), when $g(A)=0$ then $h(A)\neq 0$
           so that the length $L$ is well defined.
           In centre clothoids computed using~\eqref{eq:A:approx}
           as starting point for Newton-Raphson iterative scheme, on the
           right the minimum length clothoids. The clothoids start from 
           $(0,0)$ angle $0$ to $(-0.95,0.31)$ and angle ranging from $-\pi$ to $\pi$.}
  \label{fig:3andCompare}
\end{figure}

\section{Solution of $G^1$ Hermite interpolation problem}
\label{sec:4}
By a graphical inspection the solution is in the range 
$[-20,20]$ for angles $\Delta\varphi$ and $\Delta\vartheta$
in the interval $[-\pi,\pi]$.
The function~\reffun{alg:buildGrid} is used to compute 
all the possible solutions in this interval
for angles ranging in $[-\pi,\pi]$ and selecting the ones
with minimum length.
The strategy is very simple; the interval $[-20,20]$ is 
divided in small intervals and where $g(A)$ changes
sign Newton-Raphson iterative scheme is used.
If convergence is attained and $L$, computed using~\eqref{eq:LAB},
is positive, this solution is a candidate
to be the final solution.

This strategy, although simple, is effective and permits to evaluate
the solution of Problem~\ref{prob:1} at any point.
Figure~\ref{fig:AL} shows the results of computation using~\reffun{alg:buildGrid}
with $N=32$ and $M=128$.
Notice that the solution is discontinuous for higher and lower angles.
To avoid this jump in the solution we drop the requirement of 
minimal length for all angles and solve the following problem. 
\begin{problem}\label{prob:2}
  Given two points $(x_0,y_0)$ and $(x_1,y_1)$ and two angles 
  $\vartheta_0$ and $\vartheta_1$, find a clothoid segment of 
  the form~\eqref{clot} which satisfies~\eqref{eq:prob:1}
  with minimal positive $L$ for $\abs{\vartheta_0}$ and 
  $\abs{\vartheta_1}$ less then $\pi/2$.
  For greater angles we choose the solution that make
  the function $\mathcal{A}(\Delta\vartheta,\Delta\varphi)$
  with $\mathcal{A}(\Delta\vartheta,\Delta\varphi)\in\{A\,|\,\Theta(A,\Delta\vartheta,\Delta\varphi)=0\}$
  a continuos surface. 
\end{problem}
Looking at Figure~\ref{fig:AL} we guess that $\mathcal{A}(\Delta\vartheta,\Delta\varphi)$
can be approximated by a plane.
A simple fitting on the solution of Problem~\ref{prob:2} 
with a plane, results in the following approximation for $\mathcal{A}(\Delta\vartheta,\Delta\varphi)$:
\begin{EQ}\label{eq:A:approx}
  \mathcal{A}(\Delta\vartheta,\Delta\varphi) \approx
   2.4674\,\Delta\vartheta + 5.2478\,\Delta\varphi.
\end{EQ}
Using~\eqref{eq:A:approx} as starting point for Newton-Raphson 
the correct solution for Problem~\ref{prob:2} is found in few 
iterations. Figure~\ref{fig:AL} show the computed solution
found using this strategy. 
Computing the solution with Newton-Raphson starting with the guess
given by~\eqref{eq:A:approx} in a $1024\times 1024$ grid
with a tolerance of $10^{-10}$ results in the following iteration distribution:
\begin{center}
\begin{tabular}{c|ccccc}
  iter  & 1 & 2 & 3   & 4    & 5 \\
  \hline
  \#cases & 1 & 212 & 148880 & 846868 & 54664 \\
     & \multicolumn{2}{c}{less than  $1\%$} & $14\%$ & $80\%$ & $5\%$ 
\end{tabular}
\end{center}
Notice that the average number of iteration is $3.9$ while
the maximum number of iterations per point is $5$. 
Figure~\ref{fig:3andCompare} on the right shows the difference of computing minimum
length clothoid curve and clothoid curve of Problem~\ref{prob:2}.
The minimum length produces a bad looking solution for some
angles combinations. The curve produced by the solution of Problem~\ref{prob:2}
looks better and in any case curve produced by the solution of
Problem~\ref{prob:1} and Problem~\ref{prob:2} are identical
for a large combination of points and angles.

The algorithm for the clothoid computation is resumed
in function~\reffun{alg:buildClothoid} which uses 
\reffun{alg:normalizeAngle} to put angles in the correct
ranges.\\
\begin{function}[H]
  \caption{normalizeAngle($\varphi$)}
  \label{alg:normalizeAngle}
  \def\assign{\leftarrow}
  \SetAlgoLined 
  \small
  \lWhile{$\varphi>+\pi$}{$\varphi\assign\varphi-2\pi$}\;
  \lWhile{$\varphi<-\pi$}{$\varphi\assign\varphi+2\pi$}\;
  \Return{$\varphi$}\;
\end{function}
Function \reffun{alg:buildClothoid} solves equation \eqref{eq:g} that is it finds a solution of the equation
 $ g(A)=0$ by calling \reffun{alg:findA} described at the end of Section \ref{sec:3}.
\begin{function}[H]
  \caption{buildClothoid($x_0$, $y_0$, $\vartheta_0$, $x_1$, $y_1$, $\vartheta_1$)}
  \label{alg:buildClothoid}
  \SetKwFunction{findA}{findA}
  \SetKwFunction{Aguess}{Aguess}
  \SetKwFunction{normalizeAngle}{normalizeAngle}
  \def\assign{\leftarrow}
  \small
  \SetAlgoLined 
  $\Delta x \assign x_1-x_0$;\quad
  $\Delta y \assign y_1-y_0$\;
  Compute $r$ and $\varphi$ from $r\cos\varphi=\Delta x$ and $r\sin\varphi=\Delta y$\;
  $\ASSIGNm{\Delta\varphi}\normalizeAngle(\vartheta_0-\varphi)$;\quad
  $\ASSIGNm{\Delta\vartheta}\normalizeAngle(\vartheta_1-\vartheta_0)$\; 
  $\ASSIGNm{A}\findA( 2.4674\,\Delta\vartheta + 5.2478\,\Delta\varphi,\Delta\vartheta, \Delta\varphi)$\;
  $L\assign r/\Theta(A,\Delta\vartheta,\Delta\varphi+\frac{\pi}{2})$;~
  $\kappa\assign(\Delta\vartheta-A)/L$;~
  $\kappa'\assign(2\,A)/L^2$\;
  \Return{$\kappa$, $\kappa'$, $L$}
\end{function}
The algorithm needs accurate computation of Fresnel 
related functions $g(A)$ and $h(A)$ with relative
derivative which are discussed in the next section.
\begin{figure}[!htcb]
  \begin{center}
    \includegraphics[scale=0.45]{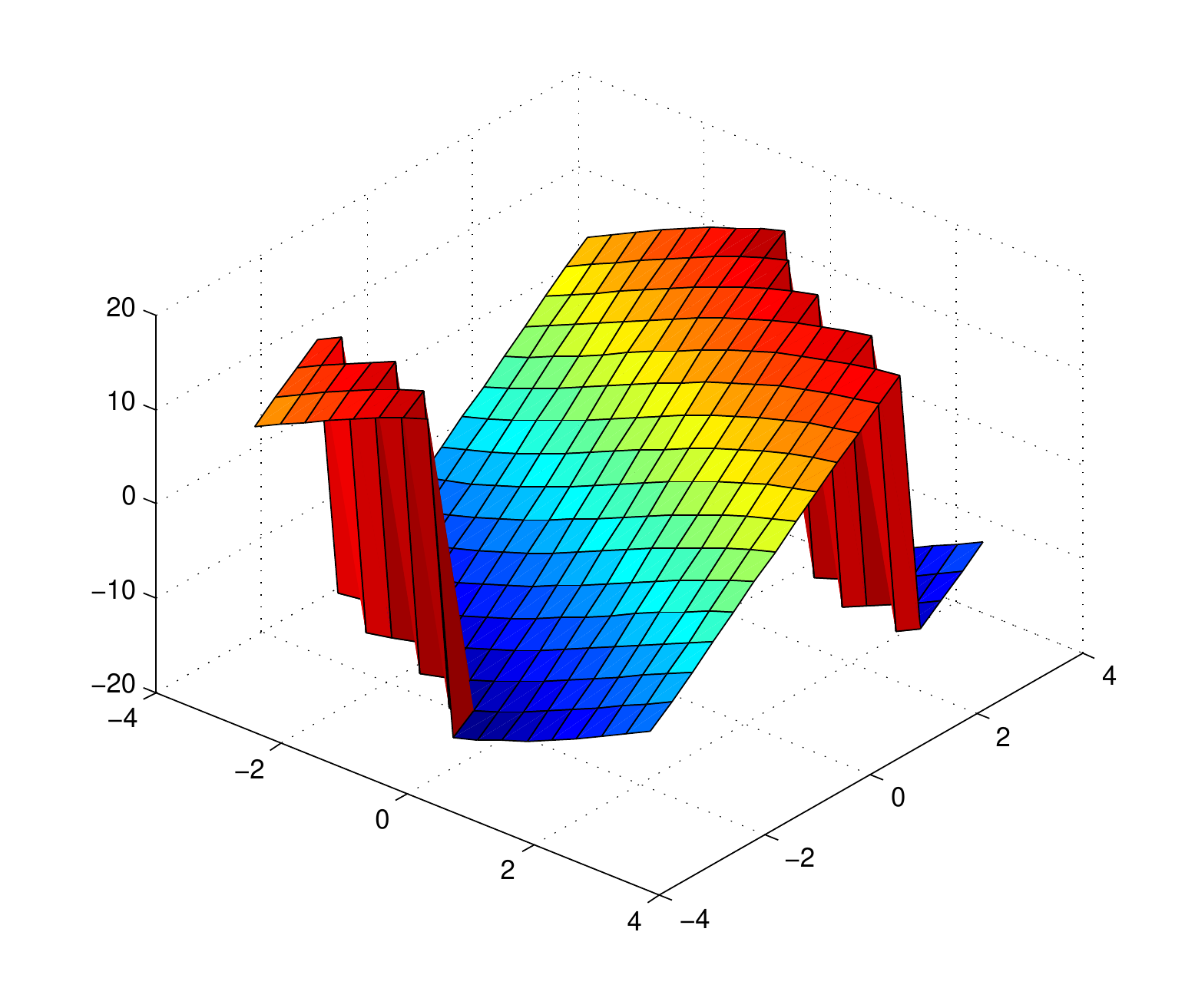} 
    \includegraphics[scale=0.45]{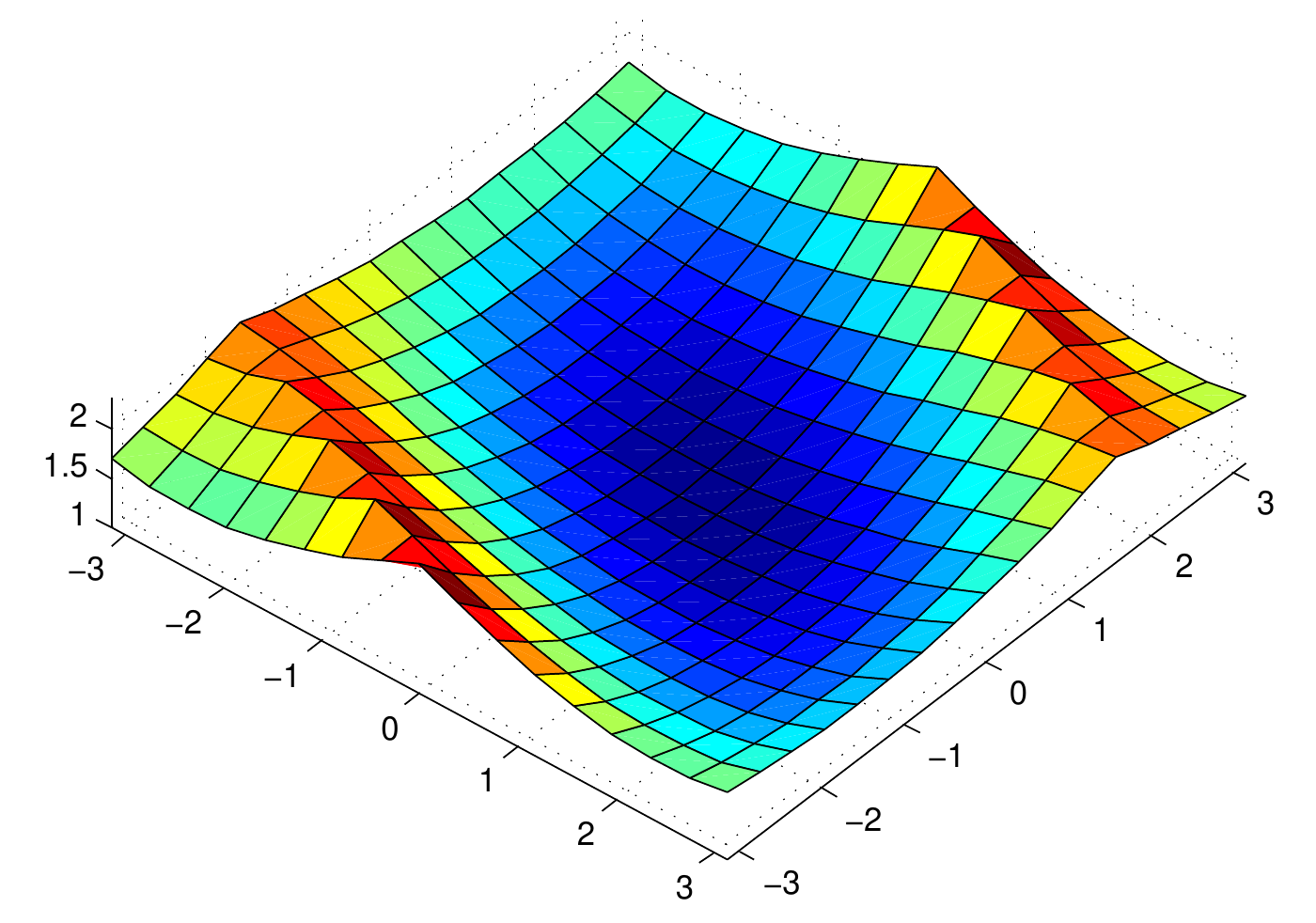}
    \\
    \includegraphics[scale=0.45]{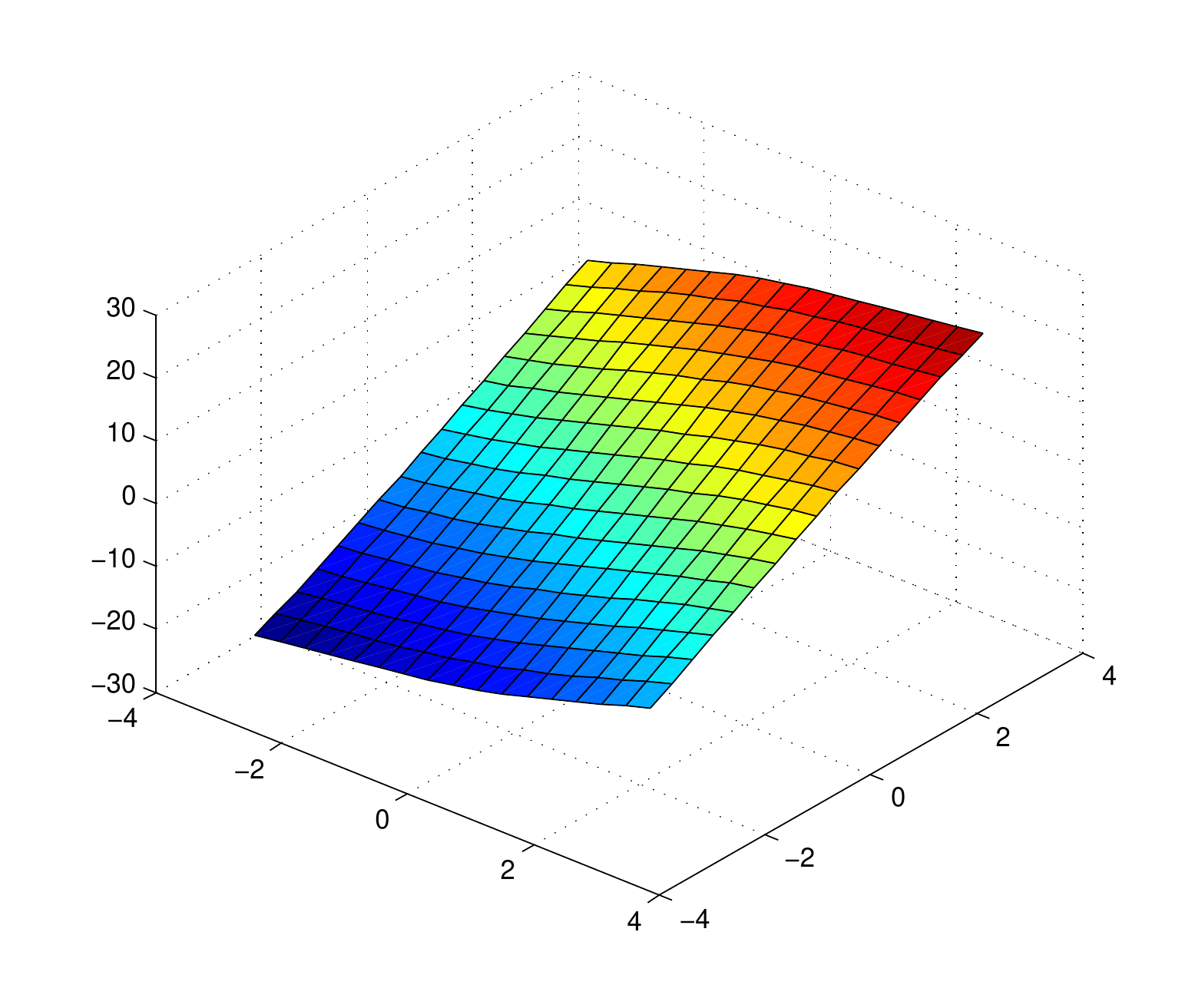}
    \includegraphics[scale=0.45]{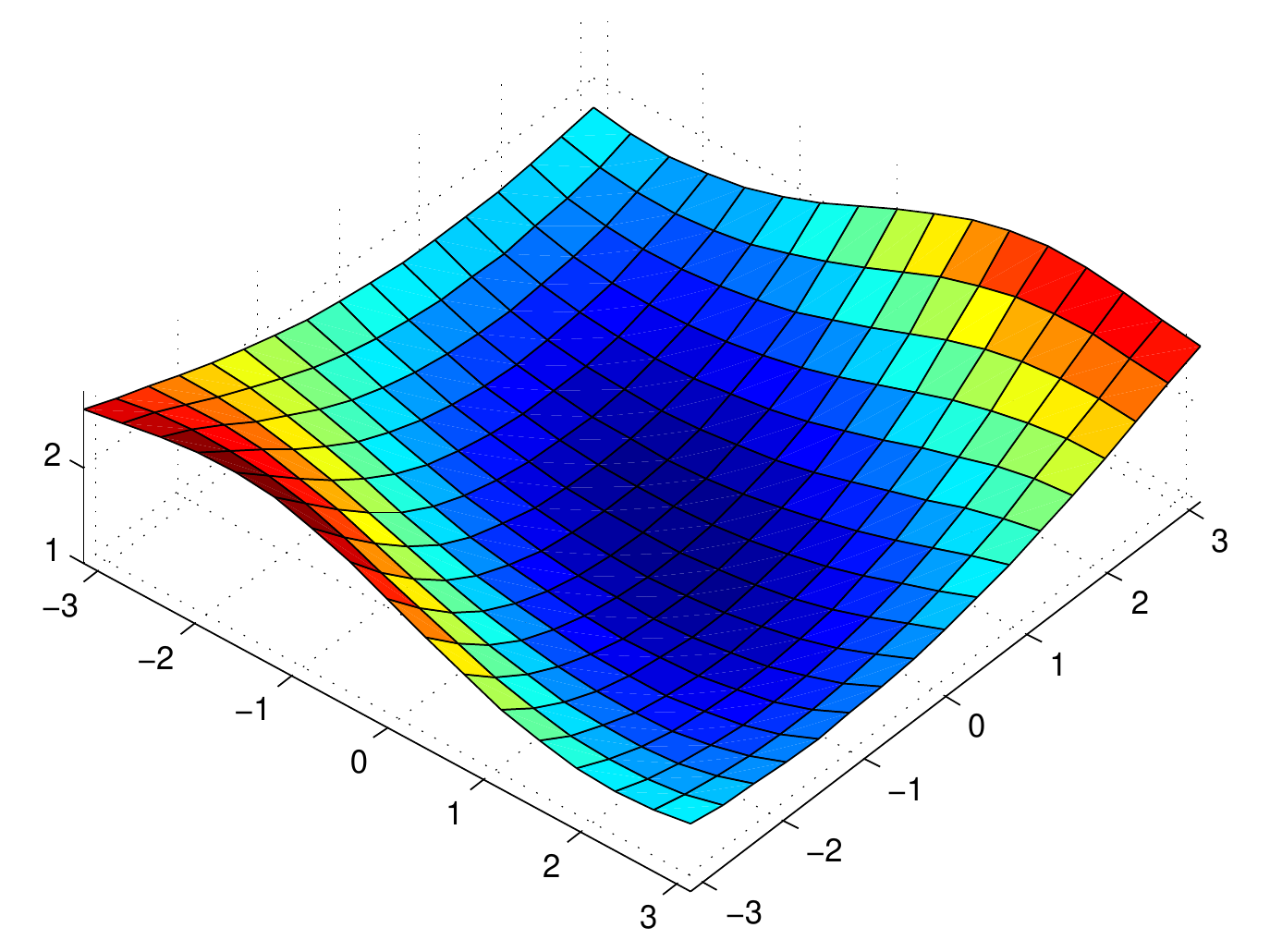}
  \end{center}
  \caption{On the top, computation of parameter $A$ and minimal length and
           for nonlinear system~\eqref{eq:F:nonlin:6}.
           On the bottom computation of parameters $A$ and $L$
           for Problem~\ref{prob:2}.
           Computation of the figures on the top are done using auxiliary 
           function \reffun{alg:buildGrid}.}
  \label{fig:AL}
\end{figure}

\section{Computation of Fresnel and related integrals}\label{sec:5}
Among the various possible definitions, we choose the following one, which is the same adopted in
\citeNP{abramowitz:1964}.
\begin{definition}[Fresnel Integrals]
Following \citeNP{abramowitz:1964} the Fresnel Integrals are defined as
\begin{EQ}\label{fresnel}
  \Cf(t)=\int_0^t\cos \left(\frac{\pi}{2}\tau^2\right)\dtau,\qquad
  \Sf(t)=\int_0^t\sin \left(\frac{\pi}{2}\tau^2\right)\dtau.\qquad
\end{EQ} 
\end{definition}
\begin{remark}
Some authors prefer the definition
\begin{EQ}
  \Cf(t)=\int_0^t\cos(\tau^2)\dtau,
  \quad\textrm{and}\quad
  \Sf(t)=\int_0^t\sin(\tau^2)\dtau.
\end{EQ}
However by using the identities
\begin{EQ}
  \int_0^t\cos(\tau^2)\dtau =\dfrac{\sqrt{\pi}}{\sqrt{2}} 
  \int_0^{\frac{\sqrt{2}}{\sqrt{\pi}}t}\cos\left(\frac{\pi}{2}\tau^2\right)\dtau,
  \qquad
  \int_0^t\sin(\tau^2)\dtau =\dfrac{\sqrt{\pi}}{\sqrt{2}} 
  \int_0^{\frac{\sqrt{2}}{\sqrt{\pi}}t}\sin\left(\frac{\pi}{2}\tau^2\right)\dtau,
\end{EQ}
it is easy to pass between the two definitions.
\end{remark}

To compute the standard Fresnel integrals one can use algorithms described in 
\citeNP{Snyder:1993},  \citeNP{Smith:2011} and \citeNP{Thompson:1997} or using continued fraction 
expansion as in \citeNP{Backeljauw:2009}.
It is possible to reduce the integrals \eqref{clot} 
to a linear combination of this standard Fresnel integrals \eqref{fresnel}.
However simpler expression are obtained using also the momenta of the Fresnel integrals:
\begin{EQ}\label{fresnel:moments}
  \Cf_k(t)=\int_0^t\!\!\!\tau^k\cos \left(\frac{\pi}{2}\tau^2\right)\dtau,\quad
  \Sf_k(t)=\int_0^t\!\!\!\tau^k\sin \left(\frac{\pi}{2}\tau^2\right)\dtau.\qquad
\end{EQ} 
Notice that $\Cf(t)\DEF \Cf_0(t)$ and $\Sf(t)\DEF\Sf_0(t)$.
Closed forms via the exponential integral or the Gamma function are also possible,
however we prefer to express them as a recurrence.  
Integrating by parts, the following recurrence is obtained:
\begin{EQ}[rcl]\label{eq:fresnel:recurrence}
  \Cf_{k+1}(t) & = &
  \dfrac{1}{\pi}\left(
    t^{k}\sin\left(\frac{\pi}{2}t^2\right)-k\,\Sf_{k-1}(t)
    \right),\\
  \Sf_{k+1}(t) & = &
  \dfrac{1}{\pi}\left(
    k\,\Cf_{k-1}(t)-t^{k}\cos\left(\frac{\pi}{2}t^2\right)\right).
\end{EQ} 
Recurrence is started by computing standard Fresnel integrals~\eqref{fresnel}
and (by using the change of variable $z=\tau^2$) the following values
\begin{EQ}
  \Cf_1(t) = \frac{1}{\pi}\sin \left(\frac{\pi}{2}t^2\right),
  \qquad
  \Sf_1(t) = \frac{1}{\pi}\left(1-\cos\left(\frac{\pi}{2}t^2\right)\right),
\end{EQ} 
From recurrence follows that $\Cf_k(t)$ and $\Sf_k(t)$ with $k$ odd 
do not contain Fresnel integrals \eqref{fresnel}
and are combination of elementary functions.

The computation of clothoids relies most on the evaluation of integrals of kind \eqref{eq:def:Theta} with their derivatives. The reduction is possible via a change of variable and integration by parts. It is enough to consider two integrals, that cover all possible cases:
\begin{EQ}[rcl]\label{eq:fresnel:general}
  X_k(a,b,c)&=&\int_0^1 \tau^k\cos\left(\frac{a}{2}\tau^2+b\tau+c\right)\dtau,\\
  Y_k(a,b,c)&=&\int_0^1 \,\tau^k\sin\left(\frac{a}{2}\tau^2+b\tau+c\right)\dtau.
\end{EQ}
In fact, integrals \eqref{eq:g} and \eqref{eq:h} with their
derivatives can be evaluated by using the identity
\begin{EQ}[l]
   \int_0^s \cos\left(\frac{1}{2}\kappa'\tau^2+\kappa\tau+\vartheta_0\right)\dtau=
   s\int_0^1 \cos\left(\dfrac{a}{2}z^2+bz+c\right)\dz
\end{EQ}
where $a= \kappa's^2$, $b =\kappa s$, and $c = \vartheta_0$ so that
\begin{EQ}
   g(\eta) = Y_0(2\eta,\Delta\vartheta-\eta,-\varphi),\qquad
   h(\eta) = X_0(2\eta,\Delta\vartheta-\eta,-\varphi),
\end{EQ}
and finally, equation \eqref{clot} can be evaluated as
\begin{EQ}[rcl]
  x(s) &=& x_0 + s\,X_0(\kappa's^2,\kappa s,\vartheta_0), \\
  y(s) &=& y_0 + s\,Y_0(\kappa's^2,\kappa s,\vartheta_0).
\end{EQ}
From now on, algorithms for accurate computation of $X_k$ and $Y_k$
are discussed.
From the well known identities
\begin{EQ}\label{eq:cos:sin:identity}
  \cos(\alpha+\beta)=\cos\alpha\cos\beta-\sin\alpha\sin\beta,\quad
  \sin(\alpha+\beta)=\sin\alpha\cos\beta+\cos\alpha\sin\beta,\qquad
\end{EQ}
integrals \eqref{eq:fresnel:general} can be rewritten as
\begin{EQ}[rcl]
  X_k(a,b,c)&=&X_k(a,b,0)\cos c-Y_k(a,b,0)\sin c, \\
  Y_k(a,b,c)&=&X_k(a,b,0)\sin c+Y_k(a,b,0)\cos c,
\end{EQ}
Thus defining $X_k(a,b)\DEF X_k(a,b,0)$ and $Y_k(a,b)\DEF Y_k(a,b,0)$
the computation of \eqref{eq:fresnel:general}
is reduced to the computation of $X_k(a,b)$ and $Y_k(a,b)$.
It is convenient to introduce the following quantities 
\begin{EQ}
  s = \dfrac{a}{\abs{a}},\qquad
  z =\frac{\sqrt{\abs{a}}}{\sqrt{\pi}},\qquad
  \ell
  =\frac{s\,b}{\sqrt{\pi\abs{a}}},\quad
  \gamma
  =-\frac{s\,b^2}{2\abs{a}},
\end{EQ}
so that it is possible to rewrite the argument of the trigonometric
functions of $X_k(a,b)$ and $Y_k(a,b)$ as
\begin{EQ}[rcl]
   \frac{a}{2}\tau^2+b\tau&=&
   \dfrac{\pi}{2}s
   \left(\tau \dfrac{\sqrt{\abs{a}}}{\sqrt{\pi}}+\frac{s\,b}{\sqrt{\pi\abs{a}}}\right)^2
   -\frac{s\,b^2}{2\abs{a}}
   =
   \dfrac{\pi}{2}s\big(\tau\,z+\ell\big)^2+\gamma.
\end{EQ}
By using the change of variable $\xi=\tau\,z+\ell$ with inverse $\tau= z^{-1}(\xi-\ell)$
for $X_k(a,b)$ and the identity~\eqref{eq:cos:sin:identity} we have:
\begin{EQ}[rcl]
  X_k(a,b) &=& 
  z^{-1}\int\limits_{\ell}^{\ell+z}
  z^{-k}(\xi-\ell)^k\cos\left(\frac{s\pi}{2}\xi^2+\gamma\right)\dxi,\\
  &=& 
  z^{-k-1}
  \int\limits_{\ell}^{\ell+z}  
  \sum_{j=0}^k \binom{k}{j}\,\xi^j\ell^{k-j}\cos\left(\frac{s\pi}{2}\xi^2+\gamma\right)\dxi,\\
  &=& z^{-k-1}\sum_{j=0}^k \binom{k}{j}\,\ell^{k-j}\left[\cos \gamma \Delta\Cf_j-s\sin\gamma\Delta\Sf_j\right],
\end{EQ}
where
\begin{EQ}\label{eq:def:Cj:Sj}
  \Delta\Cf_j= \Cf_j(\ell+z)-\Cf_j(\ell), \quad \Delta\Sf_j= \Sf_j(\ell+z)-\Sf_j(\ell),
  \qquad
\end{EQ}
are the evaluation of the momenta of the Fresnel integrals as defined in \eqref{fresnel:moments}.
Analogously for $Y_k(a,b)$ we have:
\begin{EQ}
  Y_k(a,b) = 
  z^{-k-1}
  \sum_{j=0}^k \binom{k}{j}\ell^{k-j}
  \left[\sin\gamma\Delta\Cf_j+s\cos \gamma \Delta\Sf_j\right].
\end{EQ}
A cheaper way to compute $X_k(a,b)$ and $Y_k(a,b)$ is via  recurrence
as in \eqref{eq:fresnel:recurrence} for the computation of Fresnel momenta.
Integrating the identity,
\begin{EQ}\label{eq:pre:rec:XY}
  \TotDer[t]{} \left[ t^k \sin\left(\dfrac{a}{2}t^2+bt\right)\right]
  =
  k t^{k-1}\sin\left(\dfrac{a}{2}t^2+bt\right)
  + t^k (at+b)\cos\left(\dfrac{a}{2}t^2+bt\right)
\end{EQ}
we have
\begin{EQ}
  \sin\left(\dfrac{a}{2}+b\right)
  =
  k Y_{k-1}(a,b) +
  a X_{k+1}(a,b) + b X_k(a,b)
\end{EQ}
which can be solved for $X_{k+1}(a,b)$.
Repeating the same argument with $\cos((a/2)t^2+bt)$ 
and solving for $Y_{k+1}(a,b)$ we obtain recurrences
for $X$ and $Y$:
\begin{EQ}[rcl]\label{eq:XY:recurrence}
  X_{k+1}(a,b)&=&
  \dfrac{1}{a}\left(
    \sin\left(\frac{a}{2}+b\right)-b\,X_{k}(a,b)-k\,Y_{k-1}(a,b)
  \right),\qquad
  \\
  Y_{k+1}(a,b)&=&
  \dfrac{1}{a}\left(
    k\,X_{k-1}(a,b)-b\,Y_{k}(a,b)-\cos\left(\frac{a}{2}+b\right)
  \right).\qquad
\end{EQ}
From the identities
\begin{EQ}[rclcl]
   \int_0^1 (a\tau+b)\cos\left(\frac{a}{2}\tau^2+b\tau\right)\dtau
   &=&
   a X_1(a,b) + b X_0(a,b)
   &=&
   \sin\left(\frac{a}{2}+b\right),
   \\
   \int_0^1 (a\tau+b)\sin\left(\frac{a}{2}\tau^2+b\tau\right)\dtau
   &=&
   a Y_1(a,b) + b Y_0(a,b)
   &=&
   1 - \cos\left(\frac{a}{2}+b\right),
\end{EQ}
it follows
\begin{EQ}[rcl]\label{eq:XY:recurrence:base}
   X_1(a,b) &=& \dfrac{1}{a}\left(\sin\left(\frac{a}{2}+b\right)-b X_0(a,b)\right),
   \\
   Y_1(a,b) &=& \dfrac{1}{a}\left(1-\cos\left(\frac{a}{2}+b\right)-b Y_0(a,b)\right).
\end{EQ}
To complete recurrence we need the initial values $X_0(a,b)$, $Y_0(a,b)$:
\begin{EQ}[rcl]\label{eq:XY:recurrence:base:0}
   X_0(a,b) &=& z^{-1}\big(\cos \gamma \Delta\Cf_0-s\sin\gamma\Delta\Sf_0\big), \\
   Y_0(a,b) &=& z^{-1}\big(\sin \gamma \Delta\Cf_0+s\cos\gamma\Delta\Sf_0\big),
\end{EQ}
where $\Cf_0$ and $\Sf_0$ is defined in \eqref{eq:def:Cj:Sj}.
This recurrence may be inaccurate when $\abs{a}$ is small, in fact $z$ appears in the denominator of many fractions.
For this reason for small values of $\abs{a}$ we substitute this recurrence 
with asymptotic expansions.

\section{Accurate computation with small parameters}
\label{sec:6}
When the parameter $a$ is small we use 
identity~\eqref{eq:cos:sin:identity} to 
derive series expansion
\begin{EQ}[rcl]\label{eq:serie:X:asmall}
   X_k(a,b)
   &=&
   \int_0^1\tau^k\cos\left(\frac{a}{2}\tau^2+b\tau\right)\dtau \\
   &=&
   \int_0^1\tau^k\left[
   \cos\left(\frac{a}{2}\tau^2\right)\cos(b\tau)
   -
   \sin\left(\frac{a}{2}\tau^2\right)\sin(b\tau)
   \right]\dtau
   \\
   &=&
   \sum_{n=0}^\infty\frac{(-1)^n}{(2n)!}\left(\frac{a}{2}\right)^{2n}
   X_{4n+k}(0,b)
   -
   \sum_{n=0}^\infty\frac{(-1)^n}{(2n+1)!}\left(\frac{a}{2}\right)^{2n+1}
   Y_{4n+2+k}(0,b)
   \\
   &=&
   \sum_{n=0}^\infty\frac{(-1)^n}{(2n)!}\left(\frac{a}{2}\right)^{2n}
   \left[
   X_{4n+k}(0,b)-\dfrac{a\,Y_{4n+2+k}(0,b)}{2(2n+1)}
   \right]
\end{EQ}
and analogously using again identity~\eqref{eq:cos:sin:identity}
we have the series expansion
\begin{EQA}[rcl]\label{eq:serie:Y:asmall}
   Y_k(a,b) &=&
   \int_0^1\tau^k\sin\left(\frac{a}{2}\tau^2+b\tau\right)\dtau \\
   &=&
   \sum_{n=0}^\infty\frac{(-1)^n}{(2n)!}\left(\frac{a}{2}\right)^{2n}
   \left[
   Y_{4n+k}(0,b)+\dfrac{a\,X_{4n+2+k}(0,b)}{2(2n+1)}.
   \right]
\end{EQA}
From the inequalities
\begin{EQ}
  \abs{X_{k}}\leq \int_{0}^{1}|\tau^{k}|\dtau=\dfrac{1}{k+1},
  \qquad
  \abs{Y_{k}}\leq \int_{0}^{1}|\tau^{k}|\dtau=\dfrac{1}{k+1},
\end{EQ}
we can estimate the remainder for the series of $X_k$
\begin{EQ}[rcl]
   R_{p,k} &=& 
   \abs{
   \sum_{n=p}^\infty\frac{(-1)^n}{(2n)!}\left(\frac{a}{2}\right)^{2n}
   \left[
   X_{4n+k}(0,b)-\dfrac{a\,Y_{4n+2+k}(0,b)}{2(2n+1)}
   \right]}
   \\
   &\leq&
   \sum_{n=p}^\infty\frac{1}{(2n)!}\left(\frac{a}{2}\right)^{2n}
   \left[\dfrac{1}{4n+1}+\dfrac{\abs{a}}{2(2n+1)(4n+3)}
   \right]
   \\
   &\leq&
   \left(\frac{a}{2}\right)^{2p}
   \sum_{n=p}^\infty\frac{1}{(2(n-p))!}\left(\frac{a}{2}\right)^{2(n-p)}
   \\
   &\leq&
   \left(\frac{a}{2}\right)^{2p}
   \sum_{n=0}^\infty\frac{1}{(2n)!}\left(\frac{a}{2}\right)^{2n}
   =
   \left(\frac{a}{2}\right)^{2p}\cosh(a).
\end{EQ}
The same estimate is obtained for the series of $Y_k$
\begin{remark}\label{rem:epsilon}
Series \eqref{eq:serie:X:asmall} and \eqref{eq:serie:Y:asmall}
converge fast. For example if $\abs{a}<10^{-4}$ and $p=2$ the error is less than $6.26\cdot 10^{-18}$
while if $p=3$ the error is less than $1.6\cdot 10^{-26}$.
\end{remark}
Recurrence~\eqref{eq:XY:recurrence}-\eqref{eq:XY:recurrence:base}-\eqref{eq:XY:recurrence:base:0}
permits to compute 
$X_k(a,b)$ and $Y_k(a,b)$ at arbitrary precision when $a\neq 0$, but when $a=0$ it modifies to
\begin{EQ}[rcl]\label{eq:XY:recu:a:small}
   X_k(0,b) & = & b^{-1}\big(\sin b-k\,Y_{k-1}(0,b)\big),\\
   Y_k(0,b) & = & b^{-1}\big(k\,X_{k-1}(0,b)-\cos b\big),
\end{EQ}
with starting point
\begin{EQ}
   X_0(0,b) = b^{-1}\sin b,\qquad
   Y_0(0,b) = b^{-1}(1-\cos b).
\end{EQ}
Recurrence~\eqref{eq:XY:recu:a:small} with~\eqref{eq:serie:X:asmall} and
\eqref{eq:serie:Y:asmall} permits computation when $b\neq 0$,
unfortunately recurrence~\eqref{eq:XY:recu:a:small} is highly
unstable and cannot be used.
In alternative an explicit formula based on Lommel function
$s_{\mu,\nu}(z)$ can be used \cite{Shirley:2003}.
Explicit formula is the following
\begin{EQ}[rcl]\label{eq:int:XY0b}
   X_k(0,b) &=&
   \dfrac{
     k s_{k+\frac{1}{2},\frac{3}{2}}(b)\sin b +f(b)s_{k+\frac{3}{2},\frac{1}{2}}(b)
   }{(1+k)b^{k+\frac{1}{2}}}
   + \dfrac{\cos b}{1+k},\qquad
   \\
   Y_k(0,b) &=&
   \dfrac{
     k s_{k+\frac{3}{2},\frac{3}{2}}(b)\sin b+g(b)s_{k+\frac{1}{2},\frac{1}{2}}(b)
   }{(2+k)b^{k+\frac{1}{2}}}
   + \dfrac{\sin b}{2+k},
\end{EQ}
where $f(b) = b^{-1}\sin b-\cos b$ and $g(b)=f(b)(2+k)$. Lommel function
has the following expansion (see~\url{http://dlmf.nist.gov/11.9})
\begin{EQ}\label{eq:lommel}
  s_{\mu,\nu}(z) 
  =
  z^{\mu+1}\sum_{n=0}^\infty\dfrac{(-z^2)^n}{\alpha_{n+1}(\mu,\nu)},
  \qquad
  \alpha_n(\mu,\nu)=\prod_{m=1}^n ((\mu+2m-1)^2-\nu^2),
\end{EQ}
and using this expansion in~\eqref{eq:int:XY0b} results in the following
explicit formula
\begin{EQ}[rcl]
   X_k(0,b) &=&
   A(b)w_{k+\frac{1}{2},\frac{3}{2}}(b)+
   B(b)w_{k+\frac{3}{2},\frac{1}{2}}(b)+
   \dfrac{\cos b}{1+k},\qquad
   \\
   Y_k(0,b) &=&
   C(b)w_{k+\frac{3}{2},\frac{3}{2}}(b)+
   D(b)w_{k+\frac{1}{2},\frac{1}{2}}(b)+
   \dfrac{\sin b}{2+k},\qquad
\end{EQ}
where
\begin{EQ}
   w_{\mu,\nu}(b) =
   \sum_{n=0}^\infty\dfrac{(-b^2)^n}{\alpha_{n+1}(\mu,\nu)}, \quad
   A(b) = \dfrac{kb\sin b}{1+k},\quad
   B(b) = \dfrac{(\sin b-b\cos b)b}{1+k},\\
   C(b) = -\dfrac{b^2\sin b}{2+k},\quad
   D(b) = \sin b-b\cos b.
\end{EQ}

\section{Concluding remarks}
The proposed algorithm is detailed in the appendix using pseudocode
and can be easily translated in any programming language.
A MATLAB implementation is furnished to test the functions
introduced in the paper.
It is important that the computation of Fresnel integrals
is accurate because they are involved in many expansions
for the clothoid fitting.
In the MATLAB implementation of the proposed algorithm
for Fresnel integrals approximation a slightly modified version
of Venkata Sivakanth Telasula script, available in MATLAB Central,
was used.

\appendix
\section{Algorithms for the computation of Fresnel related integrals}
We present here the algorithmic version of the analytical expression we derived in 
Section~\ref{sec:5} and~\ref{sec:6}.
This algorithm are necessary for the computation of 
the main function \reffun{alg:buildClothoid} of Section~\ref{sec:4} which takes the input data ($x_0$, $y_0$, $\vartheta_0$, $x_1$, $y_1$, $\vartheta_1$) and returns the parameters ($\kappa$, $\kappa'$, $L$) that solve the problem as expressed in equation \eqref{clot}.
Function \reffun{alg:evalXY}
computes the generalized Fresnel integrals \eqref{eq:fresnel:general}.
It distinguishes the cases of $a$ larger or smaller than a threshold $\varepsilon$.
The value of  $\varepsilon$ is discussed in Section~\ref{sec:6},
see for example Remark \ref{rem:epsilon}. 
Formulas~\eqref{eq:XY:recurrence}-\eqref{eq:XY:recurrence:base}-\eqref{eq:XY:recurrence:base:0},
used to compute $X_k(a,b)$ and $Y_k(a,b)$ at arbitrary precision when 
$\abs{a}\geq\varepsilon$, are implemented in function \reffun{alg:evalXYaLarge}.
Formulas~\eqref{eq:serie:X:asmall}-\eqref{eq:serie:Y:asmall},
used to compute $X_k(a,b)$ and $Y_k(a,b)$ at arbitrary precision when 
$\abs{a}<\varepsilon$, are implemented in function~\reffun{alg:evalXYaSmall}.
This function requires computation of~\eqref{eq:XY:recu:a:small} and \eqref{eq:int:XY0b}
implemented in function \reffun{alg:evalXYaZero}
which needs (reduced) Lommel function~\eqref{eq:lommel} implemented in 
function \reffun{alg:S}.

\begin{function}[!htcb]
  \caption{evalXY($a$, $b$, $c$, $k$)}
  \label{alg:evalXY}
  \def\assign{\leftarrow}
  \small
  \SetAlgoLined
  \lIf{$\abs{a}<\varepsilon$}{
    $X^0,Y^0\assign$\ref{alg:evalXYaSmall}($a$,$b$,$k$,$p$);
  }
  \lElse{
    $X^0,Y^0\assign$\ref{alg:evalXYaLarge}($a$,$b$,$k$)\;
  }
  \For{$j=0,1,\ldots,k$}{
    $\ASSIGNs{X_{j}}X_j^0\cos c-Y_j^0\sin c$;\quad
    $\ASSIGNs{Y_{j}}X_j^0\sin c+Y_j^0\cos c$\;
  }
  \Return{$X$, $Y$}
\end{function}

\begin{function}[!htcb]
  \caption{evalXYaLarge($a$, $b$, $k$)}
  \label{alg:evalXYaLarge}
  \def\assign{\leftarrow}
  \small
  \SetAlgoLined
  $s\assign\dfrac{a}{\abs{a}}$;\quad
  $z\assign\dfrac{\sqrt{\pi}}{\sqrt{\abs{a}}}$;\quad
  $\ell\assign\dfrac{s\,b}{z\,\pi}$;\quad
  $\gamma\assign-\dfrac{sb^2}{2\abs{a}}$;\quad
  $t\assign\dfrac{1}{2}a+b$\;
  $\ASSIGNl{\Delta\Cf_0}\Cf(\ell+z)-\Cf(\ell)$\;
  $\ASSIGNl{\Delta\Sf_0}\Sf(\ell+z)-\Sf(\ell)$\;
  $\ASSIGNl{X_0}z^{-1}\left(\cos\gamma\,\Delta\Cf_0-s\,\sin\gamma\,\Delta\Sf_0\right)$\;
  $\ASSIGNl{Y_0}z^{-1}\left(\sin\gamma\,\Delta\Cf_0+s\,\cos\gamma\,\Delta\Sf_0\right)$\;
  $\ASSIGNl{X_1}a^{-1}\left(\sin t-b\,X_0\right)$\;
  $\ASSIGNl{Y_1}a^{-1}\left(1-\cos t-b\,Y_0\right)$\;
  \BlankLine
  \For{$j=1,2,\ldots,k-1$}{
    $\ASSIGNl{X_{j+1}}a^{-1}(\sin t-b\,X_{j}-j\,Y_{j-1})$\;
    $\ASSIGNl{Y_{j+1}}a^{-1}(j\,X_{j-1}-b\,Y_{j}-\cos t)$\;
  }
  \Return{$X$, $Y$}
\end{function}

\begin{function}[!htcb]
  \caption{rLommel($\mu$, $\nu$, $b$)}
  \label{alg:S}
  \SetKw{AND}{and}
  \SetKw{BREAK}{break}
  \def\assign{\leftarrow}
  \small
  \SetAlgoLined

  $\ASSIGNs{t}(\mu+\nu+1)^{-1}(\mu-\nu+1)^{-1}$;\quad
  $\ASSIGNs{r}t$;\quad$n\assign 1$\;
  \lWhile{$\abs{t}>\varepsilon$}{
    $t\assign t \dfrac{(-b)}{2n+\mu-\nu+1}\dfrac{b}{2n+\mu+\nu+1}$;\quad
    $r\assign r+t$;\quad
    $n\assign n+1$\;
  }
  \Return{$r$}
\end{function}

\begin{function}[!htcb]
  \caption{evalXYaZero($b$, $k$)}
  \label{alg:evalXYaZero}
  \def\assign{\leftarrow}
  \small
  \SetAlgoLined
  \uIf{$\abs{b}<\varepsilon$}{
    $X_0\assign1-\dfrac{b^2}{6}\left(1-\dfrac{b^2}{20}\right)$;\quad
    $Y_0\assign\dfrac{b^2}{2}\left(1-\dfrac{b^2}{6}\left(1-\dfrac{b^2}{30}\right)\right)$\;
  }
  \Else{
    $\ASSIGNs{X_0}\dfrac{\sin b}{b}$;\quad
    $\ASSIGNs{Y_0}\dfrac{1-\cos b}{b}$\;
  } 
  $\ASSIGNs{A}b\sin b$;\quad
  $\ASSIGNs{D}\sin b-b\cos b$;\quad
  $\ASSIGNs{B}bD$;\quad
  $\ASSIGNs{C}-b^2\sin b$\;
  \For{$k=0,1,\ldots,k-1$}{
    $\ASSIGNl{X_{k+1}}\dfrac{kA\,\reffun{alg:S}\big(k+\frac12,\frac32,b\big) +
                         B\,\reffun{alg:S}\big(k+\frac32,\frac12,b\big) + \cos b}{1+k}$\;
    $\ASSIGNl{Y_{k+1}}\dfrac{C\,\reffun{alg:S}\big(k+\frac32,\frac32,b\big) + \sin b}{2+k}
     + D\,\reffun{alg:S}\Big(k+\frac12,\frac12,b\Big)$\;
  }
  \Return{$X$, $Y$}
\end{function}

\begin{function}[!htcb]
  \caption{evalXYaSmall($a$, $b$, $k$, $p$)}
  \label{alg:evalXYaSmall}
  \SetKwFunction{findA}{findA}
  \SetKwFunction{Aguess}{Aguess}
  \SetKwFunction{normalizeAngle}{normalizeAngle}
  \def\assign{\leftarrow}
  \small
  \SetAlgoLined
  $X^0,Y^0\assign$~\ref{alg:evalXYaZero}$(b, k+4p+2)$;\quad
  $t\assign 1$\;
  \lFor{$j=0,1,\ldots,k$}{
    $\ASSIGNs{X_j}X^0_j-\dfrac{a}{2}Y^0_{j+2}$;\quad
    $\ASSIGNs{Y_j}Y^0_j+\dfrac{a}{2}X^0_{j+2}$\;
  }
  \For{$n=1,2,\ldots,p$}{
    $t\assign\dfrac{-t\,a^2}{16n(2n-1)}$\;
    \For{$j=0,1,\ldots,k$}{
      $\ASSIGNs{X_j}X_j + t\dfrac{X^0_{4n+j}-aY^0_{4n+j+2}}{4n+2}$;\quad
      $\ASSIGNs{Y_j}Y_j + t\dfrac{Y^0_{4n+j}+aX^0_{4n+j+2}}{4n+2}$\;
    }
  }
  \Return{$X$, $Y$}
\end{function}

\bibliographystyle{elsarticle-harv} 
\bibliography{Clothoid_Spirals-refs} 


\end{document}